\setlist[enumerate]{nosep}
\def\DJ{{\fontencoding{T1}\selectfont\char208}}
\definecolor{labelkey}{rgb}{0,0.08,0.45}
\definecolor{refkey}{rgb}{0,0.6,0.0}
\definecolor{Brown}{rgb}{0.45,0.0,0.05}
\definecolor{lime}{rgb}{0.00,0.8,0.0}
\definecolor{lblue}{rgb}{0.5,0.5,0.99}
\colorlet{hlcyan}{cyan!30}
\colorlet{hlred}{red!40}
\def\namedlabel#1#2{\begingroup
   \def\@currentlabel{#2}%
   \label{#1}\endgroup
}
\newcommand{\bz}{\ensuremath{\mathbf{z}}}
\newcommand{\nnn}{\ensuremath{{n\in{\mathbb N}}}}
\newcommand{\kkk}{\ensuremath{{k\in{\mathbb N}}}}
\newcommand{\thalb}{\ensuremath{\tfrac{1}{2}}}
\newcommand{\menge}[2]{\big\{{#1}~\big |~{#2}\big\}}
\newcommand{\fenv}[1]%
{\ensuremath{\,\overrightarrow{\operatorname{env}}_{#1}}}
\newcommand{\benv}[1]%
{\ensuremath{\,\overleftarrow{\operatorname{env}}_{#1}}}
\newcommand{\scal}[2]{\left\langle{#1},{#2}  \right\rangle}
\newcommand{\RR}{\ensuremath{\mathbb R}}
\newcommand{\ball}{\ensuremath{\mathbb B}}
\newcommand{\NN}{\ensuremath{\mathbb N}}
\newcommand{\argmax}{\ensuremath{\operatorname*{argmax}}}
\newcommand{\prox}{\ensuremath{\operatorname{Prox}}}
\newcommand{\Id}{\ensuremath{\operatorname{Id}}}
\crefname{equation}{}{equations}
\crefname{chapter}{Appendix}{chapters}
\crefname{item}{}{items}
\crefname{enumi}{}{}
\theoremstyle{definition}
\newtheorem{theorem}{Theorem}[section]
\newtheorem{proposition}[theorem]{Proposition}
\newtheorem{example}[theorem]{Example}
\newtheorem{remark}[theorem]{Remark}
\providecommand{\RR}{\mathbb{R}}
\providecommand{\Sign}{\operatorname{Sign}}
\providecommand{\sign}{\operatorname{sign}}
\providecommand{\Id}{\operatorname{{ Id}}}
\providecommand{\NN}{\mathbb{N}}
\providecommand{\Id}{\operatorname{Id}}
\providecommand{\RR}{\mathbb{R}}
\providecommand{\NN}{\mathbb{N}}
\definecolor{myblue}{rgb}{.8, .8, 1}
\begin{document}

\title{\textsc{
Finding best approximation pairs for two intersections of 
closed convex sets
}}

\author{
Heinz H.\ Bauschke\thanks{
Mathematics, University
of British Columbia,
Kelowna, B.C.\ V1V~1V7, Canada. E-mail:
\texttt{heinz.bauschke@ubc.ca}.},~
Shambhavi Singh\thanks{
Mathematics, University
of British Columbia,
Kelowna, B.C.\ V1V~1V7, Canada. E-mail:
\texttt{sambha@student.ubc.ca}.},~ 
and
Xianfu Wang\thanks{
Mathematics, University
of British Columbia,
Kelowna, B.C.\ V1V~1V7, Canada. E-mail:
\texttt{shawn.wang@ubc.ca}.}
}

\date{October 15, 2021} 
\maketitle

\vskip 8mm

\begin{abstract} 
% \emph{Short abstract:}
The problem of finding a best approximation pair 
of two sets, which in turn generalizes the well known 
convex feasibility problem, has a long history that dates back to
work by Cheney and Goldstein in 1959.

In 2018, Aharoni, Censor, and Jiang revisited this problem and
proposed an algorithm that can be used when the two sets 
are finite intersections of halfspaces.
Motivated by their work, we present alternative algorithms
that utilize projection and proximity operators. 
{ Our modeling framework is able to accommodate 
even convex sets.} 
Numerical experiments indicate that these methods are 
competitive and sometimes superior 
to the one proposed by Aharoni et al.\ 

\end{abstract}

{%\small
\noindent
{\bfseries 2020 Mathematics Subject Classification:}
{Primary 
65K05; % is numerical mathematical programming 
%47H05, % monotone operators 
Secondary 
47H09, % nonexpansive maps
90C25. % the usual
}

\noindent {\bfseries Keywords:}
Aharoni--Censor--Jiang algorithm, 
best approximation pair, 
Douglas--Rachford algorithm, 
dual-based proximal method, 
proximal distance algorithm, 
stochastic subgradient descent. 

}

\section{Introduction}

Throughout this paper, we assume that 
\begin{empheq}[box=\fbox]{equation*}
    \text{$Y$ is
    a finite-dimensional real Hilbert space with inner product
    $\scal{\cdot}{\cdot}\colon Y\times Y\to\RR$, }
\end{empheq}
and induced norm $\|\cdot\|$. 
Let $m\in\{1,2,\ldots\}$, set 
$I := \{1,\ldots,m\}$ and suppose that 
\begin{empheq}[box=\fbox]{equation*}
    (\forall i\in I)\;\;
    \text{$A_i$ and $B_i$ are nonempty closed convex subsets of $Y$} 
\end{empheq}
such that 
\begin{empheq}[box=\fbox]{equation*}
A := \bigcap_{i\in I} A_i\neq\varnothing
\;\;\text{and}\;\;
B := \bigcap_{i\in I} B_i\neq\varnothing.
\end{empheq}
(We assume here without loss of generality that there are as many 
sets $A_i$ as $B_j$; otherwise, we can either ``copy'' sets or use
the full space $Y$ itself.)
It will occasionally be convenient to work with the convention
$A_{m+1} = A_1$, $A_{m+2} = A_2$, etc.; or, more formally,
$A_n = A_{1+\mathrm{rem}(n-1,m)}$ and 
$B_n = B_{1+\mathrm{rem}(n-1,m)}$. 
We also assume that the projection operators $P_{A_i}$
and $P_{B_i}$ are ``easy'' to compute while the
projections $P_A$ and $P_B$ are ``hard'' and not readily available 
(unless $m=1$).  
The problem we are interested in is to 
find a \emph{best approximation pair}, i.e., to 
\begin{equation}
\label{e:P}
\text{Find $(\bar{a},\bar{b})\in A \times B$
such that~}
\|\bar{a}-\bar{b}\| = \inf_{(a,b)\in A\times B}\|a-b\|. 
\end{equation}
(Note that this problem is actually a generalization of the famous
\emph{convex feasibility problem} 
which asks to find a point in $A\cap B$
provided that this intersection is nonempty which we do not assume here!) 
This problem has a long history, and the first systematic study was
given by Cheney and Goldstein in 1959 \cite{CG}; see also, e.g., \cite{01},
\cite{02}, { and \cite{BCL}}. 
These works, however, assume that 
{ the projection operators $P_A$ and $P_B$ are explicitly
available, which essentially means that} $m=1$. 
Recently, Aharoni, Censor, and Jiang (see \cite{ACJ}) tackled the general case.
Indeed, assuming that the sets $A_i$ and $B_i$ are \emph{halfspaces}, 
they presented  a new algorithm 
--- which we call \textbf{ACJ} for simplicity --- 
for solving \cref{e:P} where they do not require knowledge of the projectors $P_A$ 
and $P_B$ onto the corresponding polyhedra $A$ and $B$. 

\emph{The purpose of this paper is to provide other approaches to solving 
{\rm \cref{e:P}}. We also  provide the required proximity operators 
as well as numerical comparisons. 
The algorithms considered will rely only on the operators $P_{A_i}$ and $P_{B_i}$
and some other operators that are available in closed form.}

The algorithms presented will work for general closed convex sets, not just polyhedra { as long as the projection operators onto the individual 
sets making up the intersections are available.}
Implementable formulae for the underlying algorithmic operators are provided. 
Numerical experiments, similar to one in Aharoni et al.'s paper, are also performed. 
Our results show that other algorithms should be seriously 
considered for solving \cref{e:P}, especially if $m$ is small.

The remainder of the paper is organized as follows. 
In \cref{sec:model}, we consider reformulations of \cref{e:P} 
that are more amenable to the algorithms discussed in \cref{sec:algos}. 
These algorithms rely on computable formulae which we 
present in \cref{sec:proxes}. 
We present a small example on which these algorithms 
are run and convergence is observed using different metrics in \cref{sec:perfeval}. 
In \cref{ss:known} and \cref{ss:unknown}, we consider 
examples where the solutions are known or unknown, 
respectively. 
We also discuss the (positive) effect of pairing up constraints 
(see \cref{sec:duo}). 
We conclude the paper in \cref{sec:final} with a brief 
summary of our findings.

The notation employed in this paper is fairly standard and follows largely \cite{BC2017} to 
which we also refer the reader for general background material. 
{ For the reader's convenience, let us review some notions from convex analysis
that are of fundamental importance to this paper.
The \emph{indicator function} of a set $C$ is written as 
$\iota_C$; we have $\iota_C(x)= 0$ if $x\in C$ and $\iota_C(x)
=+\infty$ otherwise. The corresponding \emph{distance function}
is $d_C(x)=\inf_{c\in C}\|x-c\|$. 
If $C$ is convex, closed, and nonempty, then for every $x$, 
there exists a unique point $P_C(x)\in C$ such that 
$d_C(x)=\|x-P_C(x)\|$. The corresponding operator $P_C$ is called
the \emph{projection operator} or simply \emph{projector} of $C$. 
For instance, 
if $A$ is the \emph{halfspace} $A=\menge{x\in Y}{\scal{a}{x}\leq\alpha}$, 
where $a\neq 0$ and $\alpha\in\RR$, then 
\begin{equation}
\label{e:halfspace}
P_A(x) = x - \frac{\max\{0,\scal{a}{x}-\alpha\}}{\|a\|^2} a. 
\end{equation}
More generally, if $f$ is a function that is convex, lower semicontinuous,
and proper, then for every $x$, the function 
$y\mapsto f(y)+\thalb\|x-y\|^2$ has a unique minimizer which is the 
celebrated \emph{proximal mapping} or \emph{prox operator} of $f$, 
written $\prox_f$. 
Note that if $f=\iota_C$, then we recover the projection operator $P_C$: 
$\prox_{\iota_C} = P_C$. 
A vector $y$ is a \emph{subgradient} of $f$ at $x$ if
for every $h$, we have $f(x)+\scal{y}{h}\leq f(x+h)$;
the set of all subgradients at $x$ is the \emph{subdifferential}
of $f$ at $x$, written $\partial f(x)$. 
}

\section{Modeling \cref{e:P}}

\label{sec:model}

In the product Hilbert space 
\begin{empheq}[box=\fbox]{equation*}
X := Y\times Y,
\end{empheq}
we define
nonempty closed convex sets by 
\begin{empheq}[box=\fbox]{equation*}
(\forall i\in I)\;\;
C_i := A_i\times B_i,
\end{empheq}
along with their intersection 
\begin{empheq}[box=\fbox]{equation*}
(\forall i\in I)\;\;
C := \bigcap_{i\in I} C_i = A \times B. 
\end{empheq}
Note that the projector onto $C_i$ is still easy to compute; indeed, 
$P_{C_i}\colon (x,y)\mapsto (P_{A_i}x,P_{B_i}y)$.
The problem \cref{e:P} is thus equivalent to 
\begin{equation}
\text{minimize $h(x,y)$ subject to $(x,y)\in C$,}
\end{equation}
where 
\begin{equation}
h(x,y) = \alpha\|x-y\|^p,
\end{equation}
$\alpha>0$, and $p\geq 1$. 
Note that $h$ has full domain and it is convex --- 
but $h$ is \emph{not} strictly convex because it has
many minimizers: $\menge{(x,x)}{x\in Y}$.
Also note that  if $p>1$, then $h$ is differentiable.

The problem \cref{e:P} can thus also be alternatively thought of as 
\begin{equation}
\label{e:210121a}
\text{minimize $h(x,y) + \sum_{i\in I}\iota_{C_i}(x,y)$,}
\end{equation}
which features a nonsmooth objective function.
In the next section, we survey various algorithms that could be used to solve
the problem~\cref{e:P} or its reformulations.
We also consider the case when \cref{e:210121a} is approximated by 
\begin{equation}
\label{e:210121aL}
\text{minimize $h(x,y) + \sum_{i\in I}Ld_{C_i}(x,y)$,}
\end{equation}
for some ``large'' constant $L$. 

\section{Algorithms for solving \cref{e:P}}

\label{sec:algos}

{ In this section, we discuss various algorithms.
The algorithms in \cref{ss:ACJ} and \cref{ss:DR} are able to solve the
original problem exactly while those in the remaining subsections can 
be viewed as attempting to solve a perturbed problem.}

\subsection{The Aharoni--Censor--Jiang Algorithm}

\label{ss:ACJ}

This algorithm was recently proposed by 
Aharoni, Censor, and Jiang in \cite{ACJ}.
We denote their algorithm as \textbf{ACJ}. 
\textbf{ACJ}
builds on the earlier \textbf{HLWB} algorithm.
{ (The letters in \textbf{HLWB} signify relevant 
works 
by  Halpern \cite{Halpern}, 
by Lions \cite{Lions}, 
by Wittmann \cite{Wittmann},
and by Bauschke \cite{Bauschke}; 
the name \textbf{HLWB} was coined by 
Censor in \cite{Censor06}.)  
\textbf{ACJ} can be viewed 
as
}
% adapted to the task of  
% finding the projection of a point onto a polyhedron (see \cite{Censor06}). 
an alternating version of \textbf{HLWB} to find a solution of \cref{e:P}. 
Here is the description of \textbf{ACJ}. 
First, we fix a sequence $(\lambda_k)_\kkk$ of positive real numbers such that 
\begin{equation}
\label{e:210210a}
\lambda_k\to 0,\quad \sum_\kkk \lambda_k=\infty,\quad \sum_\kkk|\lambda_k-\lambda_{k+m}|<\infty
\end{equation}
and also an increasing (not necessarily strictly though) 
sequence of natural numbers $(n_k)_\kkk$ 
such that
\begin{equation}
\label{e:210210b}
n_k \to \infty
\quad\text{and}\quad
\sup_{k_0\in\NN}\sum_{k>k_0}\prod_{n>n_{k_0}}^{n_k}(1-\lambda_n) <\infty.
\end{equation}
For instance, \cref{e:210210a}--\cref{e:210210b} hold
when $(\forall \kkk)$  $\lambda_k=\frac{1}{k+1}$ 
and $n_k= \lfloor1.1^k\rfloor$ (see \cite[page~512]{ACJ}). 
Next, given our sequence of sets $(A_i)_{i\in\NN}$
and $\nnn$, 
we define the operator
\begin{equation}
Q_{A,n}\colon Y\times Y \to Y
\colon (w,w')\mapsto w_{n}, 
\end{equation}
where $w_0=w'$ and $w_{n}$ 
is computed iteratively via 
\begin{equation}
(\forall i\in\{0,1,\ldots,n-1\})
\quad
w_{i+1} = \lambda_{i+1}w + (1-\lambda_{i+1})P_{A_{i+1}}(w_i).
\end{equation}
The operator $Q_{B,n}$, for $(B_i)_{i\in\NN}$ and $\nnn$, 
is defined analogously. 
Finally, we initialize $(x_0,y_0)\in X\times X$, 
and iteratively update via 
\begin{align}
(\forall\kkk)\quad
(x_{k+1},y_{k+1}) &:= \begin{cases}\big(Q_{B,n_k}(y_k,y_k'),y_k\big),& \text{if $k$ is odd;}\\
\big(x_k,Q_{A,n_k}(x_{k},x_k')\big),&\text{if $k$ is even,}\end{cases}
\end{align}
and where 
$(x'_k,y'_k)_\kkk$ in $X\times X$ is a bounded sequence that 
can either be fixed beforehand, e.g., $(x'_k,y'_k)=(y_0,x_0)$, or dynamically updated using, e.g., 
$(x'_0,y'_0)=(y_0,x_0)$ and $(x'_k,y'_k)=(y_{k-1},x_{k-1})$ for $k\in\{1,2,\ldots\}$.

The main result of \cite{ACJ} yields the convergence of 
$(x_{k},y_k)_\kkk$ to a solution of \cref{e:P} provided that 
$C\neq \emptyset$ and each $A_i$ and $B_i$ is
a \emph{halfspace}. 
{ We refer the reader to \cref{e:halfspace} for the formula
for the projection onto a halfspace which is required by \textbf{ACJ}.}

\begin{remark}
Note that \textbf{ACJ} 
takes into account the order of the sets while
the problem \cref{e:P} does not.
It is a nice feature of \textbf{ACJ} that it works throughout in the 
``small'' space $X\times X$. 
{ On the other hand, we are not aware of any extension of 
\textbf{ACJ} to the case when the sets underlying the intersections 
are not halfspaces. This is an interesting topic for further research.}

\end{remark}

\subsection{Douglas--Rachford Algorithm}

\label{ss:DR}

This algorithm, abbreviated as \textbf{DR}, 
{ can be traced back to the paper by Douglas and Rachford 
\cite{DR} although its relevance to optimization was brought to light 
later in the seminal paper by Lions and Mercier \cite{LM}.
\textbf{DR} can deal with problems of the} form \cref{e:210121a}, and
it implicitly operates in the space $X^{m+1}$. 
First, set
$f_0(x,y) := \alpha\|x-y\|^p$ with $\alpha>0$ and $p\geq 1$, 
as well as 
$f_1 := \iota_{C_1},\ldots,f_m:=\iota_{C_m}$ and $I_0 := \{0\}\cup I$. 
Second, fix a parameter $0<\lambda < 2$ (the default being $\lambda=1$).

Now initialize $\bz_0 := (z_{0,0},z_{0,1},\ldots,z_{0,m}) 
=\big((x_{0,0},y_{0,0}),(x_{0,1},y_{0,1}),\ldots,(x_{0,m},y_{0,m})\big)\in 
X^{m+1}$. 
Given $\bz_k = (z_{k,0},z_{k,1},\ldots,z_{k,m})\in X^{m+1}$,
set
\begin{subequations}
\begin{align}
\bar{z}_k &:= \frac{1}{m+1}\sum_{i\in I_0}z_{k,i}\\
(\forall i\in I_0) \;\;
x_{k,i} &:= \prox_{f_i}(2\bar{z}_k-z_{k,i})\\
(\forall i\in I_0) \;\;
z_{k+1,i} &:= z_{k,i} + \lambda(x_{k,i} - \bar{z}_k)
\end{align}
\end{subequations}
to obtain the update $\bz_{k+1} := (z_{k+1,0},z_{k+1,1},\ldots,z_{k+1,m})$. 

If $i\in I$, then the prox operators corresponding to $f_i$ is simply 
the projector $P_{C_i}$.
{
In particular, if each $C_i=A_i\times B_i$ is the Cartesian product
of two halfspaces, then we may utilize \cref{e:halfspace} twice to compute 
$P_{C_i} = \prox_{f_i}$.
}
The prox operator $\prox_{f_0}$ will be computed in closed form for $p\in\{1,2\}$
in \cref{ss:proxes} below. 
{
When $p=1$, which produced better numerical results, then 
$f_0(x,y)=\alpha\|x-y\|$ and 
\begin{align}
\prox_{f_0}(x,y) = 
(x,y) - \frac{1}{\max\big\{2,\|x-y\|/\alpha\big\}}\big(x-y,y-x\big). 
\end{align}
}
It is well known 
{ (see, e.g., \cite[Proposition~28.7]{BC2017})} that the sequence $(\bar{z}_k)_{k\in\NN}$ will converge to a solution of \cref{e:210121a}, i.e., of \cref{e:P}.

\begin{remark}
The \textbf{DR} approach does not care about the order of the sets presented --- 
unlike, \textbf{ACJ}!
A downside is that it operates in the larger space $X^{m+1}$ 
which can become an issue if $m$ is large.
On the positive side, if $C_1\cap \cdots\cap C_m = \varnothing$, 
then $(\bar{z}_k)_\kkk$  will converge to a minimizer of $f_0$ over the set of least-squares
solutions (see \cite[Corollary~6.8]{130} for further information).
{ Finally, it does not require the constraint sets $C_i$ to be
Cartesian product of halfspaces.}
\end{remark}

\subsection{Dual-Based Proximal Method}
\label{ss:dualthingy}
\label{ss:DPG} 
We largely follow Beck's \cite[Section~12.4.2]{Beck2} 
{ (see also  \cite{BT14} and \cite{CDV} for further background
material)} 
but slightly modify the algorithms 
presented there to give two additional methods for solving \cref{e:P}. 
We will work with  the form given in \cref{e:210121a} 
where $h(x,y)$ needs to be $\varepsilon$-strongly convex, for some $\varepsilon>0$, 
which precludes using $\alpha\|x-y\|^p$ directly. 
However, below we will add 
$\varepsilon\frac{1}{2}(\|x\|^2+\|y\|^2)$ to 
this last function to obtain the required $\varepsilon$-strong convexity.
{
We point out that by adding this energy term and solving the 
corresponding new perturbed optimization problem, the solution obtained
does not solve the original problem exactly. 
}

The first method considered is the Dual Proximal Gradient method, 
which --- following \cite{Beck2} --- we abbreviated as \textbf{DPG}. 
Because the algorithm requires strong convexity of the objective
function, we consider 
{
\begin{equation}
\label{e:210722a}
f_0(x,y):=\alpha\tfrac{1}{2}\|x-y\|^2+\varepsilon
\tfrac{1}{2}\big(\|x\|^2+\|y\|^2\big) \;\;\text{with $\alpha>0$ 
and $\varepsilon>0$}. 
\end{equation}
}
We also set $f_1:=\iota_{C_1},\ldots,f_m:=\iota_{C_m}$. 
Second, fix a parameter $L\geq m/\varepsilon$.

Now initialize $\bz_0:=(z_{0,1},\dots,z_{0,m})= \big((x_{0,1},y_{0,1}),\ldots,(x_{0,m},y_{0,m})\big)\in 
X^{m}$, and update it using
\begin{subequations}
\begin{align}
s_k&:=\sum_{i\in I}z_{k,i}\\
x_{k} &:= \argmax_{w\in X} 
\big[ \scal{w}{s_k}-f_0(w) \,\big]\label{e:DPGit}\\
(\forall i\in I) \;\;
z_{k+1,i} &:= z_{k,i} - \frac{1}{L}x_{k}+
\frac{1}{L}P_{f_i}(x_{k}-Lz_{k,i})\label{e:DPGstep}
\end{align}
\end{subequations}
to obtain $\bz_{k+1}:=(z_{k+1,1},\dots,z_{k+1,m})$. 
{ This is the primal representation of DPG, 
see \cite[page~356]{Beck2}, which is most convenient in our setting.}
Once again, the prox operators corresponding to $f_i$ 
for $i\in I$ are just the projectors $P_{C_i}$. 
{
If the sets $C_i$ are Cartesian products of halfspaces, we may use 
\cref{e:halfspace} to compute $P_{C_i}$.}
The closed form for the argmax operator in \cref{e:DPGit} 
{ is given by 
\begin{align}
x_k = \frac{1}{(2\alpha+\varepsilon)\varepsilon}
\big((\alpha+\varepsilon)u_k+\alpha v_k, (\alpha+\varepsilon)v_k+\alpha u_k \big), 
\quad\text{where $s_k=(u_k,v_k)$.}
\end{align}
}
This formula will be proved in \cref{ss:argmax} below.
For sufficiently small $\varepsilon>0$, 
the primal sequence $(x_k)_\kkk$ approximates a solution of 
\cref{e:210121a} and hence of \cref{e:P} 
{ provided that the relative interiors of the sets $C_i$ form
a nonempty intersection (see \cite[page~362]{Beck2}). 
Note that we do not expect that the primal sequence converges to an 
exact solution of \cref{e:P} because the objective function 
$f_0$ in \cref{e:210722a} is not identical to the one required to tackle
\cref{e:P}.}

An accelerated version of \textbf{DPG}, 
known as Fast Dual Proximal Gradient or simply \textbf{FDPG}, 
applies a FISTA-type acceleration 
{ (see \cite[Section~12.3]{Beck2} and \cite{BT14}.)}
Here is how \textbf{FDPG} proceeds: 
Starting with $\bz_0$ as before, 
initialize $\textbf{w}_0:=\bz_0$, $t_0:=1$, 
and update via
\begin{subequations}
\begin{align}
s_{k}'&:=\sum_{i\in I}w_{k,i}\\
u_{k} &:= \argmax_{v\in X}  
\big[ \scal{v}{s_k'}-f_0(v)\,\big]\\ 
(\forall i\in I) \;\;
z_{k+1,i} &:=  w_{k,i} - \frac{1}{L}u_{k}+\frac{1}{L}P_{f_i}(u_{k}-Lw_{k,i})\label{e:FDPGstep}\\
t_{k+1}&:=\frac{1+\sqrt{1+4t_k^2}}{2}\\
(\forall i\in I) \;\;
w_{k+1,i} &:= z_{k,i+1}+\left(\frac{t_{k}-1}{t_{k+1}}\right)(z_{k,i+1}-z_{k,i})
\end{align}
\end{subequations}
to get the primal sequence of interest
\begin{equation}
x_{k+1}:=\argmax_{v\in X}
\big[\scal{v}{s_{k+1}}-f_0(v)\,\big],\quad
\text{where}\quad s_{k+1}:=\sum_{i\in I}z_{k+1,i}.
\end{equation}
Again, for sufficiently small $\varepsilon>0$, 
the sequence $(x_k)_\kkk$ approximates a solution ``close'' 
to that of \cref{e:210121a} 
{--- but not exactly --- }
and (as a consequence of \cref{e:P})  
provided that the relative interior of $C$ is not empty. 

\begin{remark}
Note that although a smaller $\varepsilon$ ensures a solution 
that is closer to that of the original problem \cref{e:P}, 
it also increases the lower bound for $L$, 
which in turn reduces the step size for each iteration 
as seen in \cref{e:DPGstep} and \cref{e:FDPGstep}, 
and so the speed of convergence reduces as well. 
These algorithms are not affected by the order of the sets presented, 
but like \textbf{DR}, they operate in a ``large'' 
space (here $X^m$). This may become a problem when $m$ is large.
\end{remark}

\subsection{Proximal Distance Algorithm}
\label{ss:PDA}

The Proximal Distance Algorithm, or \textbf{PDA} for short, 
was first introduced by Lange and Keys \cite{LangeKeys}. 
{
It is motivated by the framework of MM algorithms, 
where MM stands for majorize/minimize or for minorize/maximize 
depending on the underlying problem. 
This framework was pioneered by Lange; see, e.g.,
his book \cite{Lange} on this topic.
It can be interpreted as a prox-gradient method 
applied to the function $\tfrac{1}{\rho}h+\tfrac{1}{m}\sum_{i\in I}\thalb
d^2_{C_i}$, where the penalty parameter is in theory driven to $+\infty$ 
(see \cite[Section~5.5]{Lange} for a gentle introduction). 
The parameter $\rho$ has to be carefully driven to infinity. 
We will apply \textbf{PDA} to the problem formulation given by \cref{e:210121a}.
}
Set $h(x,y) := \alpha\|x-y\|$, 
which is $\sqrt{2}\alpha$-Lipschitz 
by \cref{p:prox1}, for $\alpha>0$. 
Also, write $z = (x,y)$.
The \textbf{PDA} with starting point $z_0\in X$
generates a sequence $(z_k)_\kkk$ via
\begin{equation}
\label{e:PDA}
z_{k+1} := \prox_{\rho_k^{-1}h}\Big(\sum_{i=1}^{m}\frac{1}{m}P_{C_i}z_k\Big),
\end{equation}
where 
{
\begin{align}
\prox_{\rho_k^{-1}h}(x,y)
= 
(x,y) - \frac{1}{\max\big\{2,\rho_k\|x-y\|/\alpha\big\}}\big(x-y,y-x\big)
\end{align}
by \cref{e:210120b} and where 
}
$(\rho_k)_{k\in\NN}$ is a sequence of positive (and ``sufficiently large'') parameters. 
{
If the sets $C_i$ are Cartesian products of halfspaces, we may use 
\cref{e:halfspace} to compute $P_{C_i}$.}
Under suitable choices of the parameter sequences, the 
{
sequence
}
$(z_k)_\kkk$ approximates
a solution of \cref{e:210121a}.
Lange and Keys recommend $\rho_k = \min\{(1.2)^k\rho_0,\rho_{\text{max}}\}$ 
but other choices may yield better performance (see \cite[Sections~4 and 5]{LangeKeys} and 
\cite{KZL} for details). 
Keys, Zhou, and Lange also point out 
a Nesterov-style accelerated version of PDA (\textbf{accPDA} for short), 
which proceeds as follows: 
\begin{subequations}
\label{e:accPDA}
\begin{align}
w_k &:= 
z_k + \frac{k-1}{k+2}(z_k - z_{k-1}),\\
z_{k+1} &:= \prox_{\rho_k^{-1}h}\Big(\sum_{i=1}^{m}\frac{1}{m}P_{C_i}w_k\Big). 
\end{align}
\end{subequations}
See \cite[Algorithm~1 and Section~3]{KZL} for further information. 
{ Note that because $\rho_k\leq\rho_{\text{max}}<+\infty$,
both \textbf{PDA} and \textbf{accPDA} find a solution of the penalized but not
of the original problem.}

\subsection{Stochastic Subgradient Descent}

\label{ss:SSD}

{
The roots of stochastic gradient descent can be traced back
to two key papers from the early 1950s co-authored by 
Robbins and Monro \cite{RoMo} and 
by Kiefer and Wolfowitz \cite{KiWo};
see also \cite{BCN} for a recent survey.
The method has since been generalized to many different settings.
We follow largely the presentation in \cite{Beck2}. 
}
Set $f_0(x,y) := \alpha \|x-y\|$ and 
$(\forall i\in I)$ 
$f_i := Ld_{C_i}$, where $L>0$. 
Then $f_0$ is $\sqrt{2}\alpha$-Lipschitz 
{
and (by \cref{e:210126b} below)
\begin{align}
f_0'(z) = f_0'(x,y)= \alpha\big(\sign(x-y),-\sign(x-y)\big), 
\end{align}
where ``$\sign$'' is defined in \cref{e:sS}. 
}
The other functions $f_i$ are $L$-Lipschitz.
Moreover, for $i\in I$, we have 
\begin{equation}
\label{e:210126a}
f_i'(z) = L\sign(z-P_{C_i}z)\in \partial f_i(z)
\end{equation}
by, e.g., \cite[Example~16.62]{BC2017}. 
{
If the sets $C_i$ are Cartesian products of halfspaces, we may use 
\cref{e:halfspace} to compute $P_{C_i}$.}
Now Stochastic Subgradient Descent, which we abbreviate as 
\textbf{SSD} (see \cite[Section~8.3]{Beck2} for further
information), applied to \cref{e:210121aL} generates a sequence via 
\begin{equation}
z_{k+1} := z_k - \eta_kf'_{i_k}(z_k),
\end{equation}
where $(\eta_k)_{\kkk}$ is a sequence of positive parameters 
(typically constant or a constant divided by $\sqrt{k+1}$)
and where $f'_{i_k}(z_k)\in\partial f_{i_k}(z_k)$
where $i_k$ is uniformly sampled from $I_0 := \{0\}\cup I$.

{
Under appropriate conditions, the sequence generated by \textbf{SSD} approximates 
a minimizer of the function $\alpha\|x-y\| + L\sum_{i\in I}d_{C_i}$.
Note that for large $L$, the distance functions converge pointwise 
to the corresponding indicator functions, but they are different for fixed $L$.
In this sense, \textbf{SSD} finds a perturbed but not exact solution of the original problem.
}

\section{Useful operators}

\label{sec:proxes}

In this section, we collect formulae for operators 
that are used later in our numerical experiments. 

\subsection{Prox and (sub)differential operators}\label{ss:proxes}

Denote the standard unit ball by $\ball$: 
$\ball := \menge{y\in Y}{\|y\|\leq 1}$. 
It will be convenient to define the generalized signum functions on $Y$ via 
\begin{equation}
\label{e:sS}
\sign(x) := \begin{cases}
x/\|x\|, &\text{if $x\neq 0$;}\\
0, &\text{if $x=0$}
\end{cases}
\;\;\text{and}\;\;
\Sign(x) := \begin{cases}
\{x/\|x\|\}, &\text{if $x\neq 0$;}\\
\ball,  &\text{if $x=0$.}
\end{cases}
\end{equation}
By \cite[Example~16.32]{BC2017}, we have 
\begin{equation}
(\forall x\in Y)\;\;
\sign(x) \in \Sign(x) = \partial\|\cdot\|(x).
\end{equation}

\begin{proposition}
Let $\alpha>0$. 
Then the prox operator of the function
\begin{equation}
h\colon Y\times Y \to\RR\colon
(x,y)\mapsto \alpha\thalb\|x-y\|^2.
\end{equation}
is given by 
\begin{equation}
\prox_h\colon (x,y) \mapsto
\frac{1}{2\alpha+1}
\big((1+\alpha)x+\alpha y,\alpha x+ (1+\alpha )y \big).
\end{equation}
Moreover, 
$\nabla h\colon (x,y)\mapsto 
\big(\alpha(x-y),\alpha(y-x) \big)$ 
is $(1+2\alpha)$-Lipschitz continuous.
\end{proposition}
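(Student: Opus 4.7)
The plan is to handle the three claims in the order gradient, Lipschitz constant, prox formula, because the computation of $\prox_h$ reduces to inverting a $2\times 2$ block system whose structure is precisely that of the gradient.

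First, since $h(x,y)=\tfrac{\alpha}{2}\langle x-y,x-y\rangle$ is a finite quadratic on $Y\times Y$, I would compute the partial derivatives by direct differentiation. The partials with respect to $x$ and $y$ respectively give $\alpha(x-y)$ and $\alpha(y-x)$, which yields the stated formula for $\nabla h$. No subtlety here.

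Next, for the Lipschitz estimate of $\nabla h$: I would verify it by a one-line bound rather than invoking Hessian eigenvalues, since $\nabla h$ is a linear operator on $X=Y\times Y$. Writing $L(x,y):=(\alpha(x-y),\alpha(y-x))$, I have
\begin{equation*}
\|L(x,y)\|^2 = 2\alpha^2\|x-y\|^2 \leq 2\alpha^2(\|x\|+\|y\|)^2 \leq 4\alpha^2(\|x\|^2+\|y\|^2) = 4\alpha^2\|(x,y)\|^2,
\end{equation*}
so $\nabla h$ is $2\alpha$-Lipschitz and therefore a fortiori $(1+2\alpha)$-Lipschitz. This gives the bound stated in the proposition.

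The main calculation is $\prox_h$. By definition, $\prox_h(x,y)$ is the unique minimizer over $(u,v)\in Y\times Y$ of
\begin{equation*}
\tfrac{\alpha}{2}\|u-v\|^2 + \tfrac{1}{2}\|u-x\|^2 + \tfrac{1}{2}\|v-y\|^2.
\end{equation*}
I would set the gradient equal to zero, which by the previous gradient computation produces the coupled linear system
\begin{equation*}
(1+\alpha)u - \alpha v = x, \qquad -\alpha u + (1+\alpha)v = y.
\end{equation*}
The coefficient matrix has determinant $(1+\alpha)^2-\alpha^2 = 1+2\alpha>0$, so Cramer's rule (or a quick direct elimination) gives $u=\bigl((1+\alpha)x+\alpha y\bigr)/(1+2\alpha)$ and $v=\bigl(\alpha x+(1+\alpha)y\bigr)/(1+2\alpha)$, which is exactly the claimed formula. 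Since $h$ is convex and continuous, this stationary point is the unique global minimizer, so it is indeed $\prox_h(x,y)$.

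The only place where I would pause is to notice that the problem statement gives the (non-sharp) constant $1+2\alpha$ rather than the sharp constant $2\alpha$; I would simply record the sharper bound en route and invoke monotonicity to conclude the stated one, so this is not an obstacle so much as a remark.
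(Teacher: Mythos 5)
Your proof is correct, and for the prox formula it is essentially the paper's own argument in a different notation: the paper writes $h(z)=\thalb\|Bz\|^2$ with $B=\sqrt{\alpha}\,[\Id,-\Id]$ and inverts the block matrix $\Id+B^*B$, which is exactly the $2\times 2$ system $(1+\alpha)u-\alpha v=x$, $-\alpha u+(1+\alpha)v=y$ that you obtain from the stationarity condition of $u\mapsto h(u)+\thalb\|u-z\|^2$; both routes end at the same inverse with determinant factor $1+2\alpha$. The one genuine difference is the Lipschitz claim, and there your remark is a real (if minor) correction rather than just an aside: the operator $\nabla h=B^*B=\alpha\begin{bmatrix}\Id&-\Id\\-\Id&\Id\end{bmatrix}$ has eigenvalues $0$ and $2\alpha$, so its sharp Lipschitz constant is $2\alpha$; the paper's proof reads off the largest eigenvalue $1+2\alpha$ of the matrix $\Id+\nabla h$ and attributes it to $\nabla h$, which is why it asserts (incorrectly) that $1+2\alpha$ is \emph{sharp}. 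The proposition as stated remains true, since any constant at least $2\alpha$ is a valid Lipschitz constant, and your direct norm bound $\|\nabla h(x,y)\|^2=2\alpha^2\|x-y\|^2\le 4\alpha^2\|(x,y)\|^2$ establishes the stronger statement cleanly without any eigenvalue computation.
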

\begin{proof}
Set $z = (x,y)\in Y\times Y$ and 
$B \colon Y\times Y\to Y\colon (x,y) \mapsto \sqrt{\alpha}(x-y)$. 
Then
\begin{equation}
h(z) = \thalb\|Bz\|^2 = \thalb\scal{Bz}{Bz} = \thalb \scal{B^*Bz}{z}
\end{equation}
and thus 
$\nabla h = B^*B$.
It follows that 
\begin{equation}
\prox_h = (\Id + B^*B)^{-1}.
\end{equation}
Write $B$ in block matrix form, 
$B = \sqrt{\alpha}\begin{bmatrix} \Id,-\Id\end{bmatrix}$. 
Then $B^* = \sqrt{\alpha}\begin{bmatrix} \Id,-\Id\end{bmatrix}^\intercal$,
\begin{equation}
\nabla h = B^*B = \alpha\begin{bmatrix}
\Id & -\Id \\
-\Id & \Id
\end{bmatrix},
\end{equation}
and 
\begin{equation}
\Id + \nabla h = 
\Id+B^*B = \begin{bmatrix}
(1+\alpha)\Id & -\alpha\Id \\
-\alpha\Id & (1+\alpha)\Id
\end{bmatrix}.
\end{equation}
The largest eigenvalue of the very last matrix is $1+2\alpha$
which implies that $1+2\alpha$ is the sharp Lipschitz constant of $\nabla h$.
Finally, 
\begin{equation}
%\frac{1}{2\alpha+1}
\prox_h = (\Id + \nabla h)^{-1} = 
\big(\Id+B^*B\big)^{-1} = 
\frac{1}{2\alpha+1}
\begin{bmatrix}
(1+\alpha)\Id & \alpha\Id \\
\alpha\Id & (1+\alpha)\Id
\end{bmatrix}
\end{equation}
and the result follows.
\end{proof}

\begin{proposition}
\label{p:prox1}
Let $\alpha>0$. 
The function 
\begin{equation}
h\colon Y\times Y \to\RR\colon
(x,y)\mapsto \alpha\|x-y\|,
\end{equation}
is $\alpha\sqrt{2}$-Lipschitz 
and a convenient selection of $\partial h$ is given by 
\begin{subequations}
\label{e:210126b}
\begin{align}
(x,y)&\mapsto \alpha\big(\sign(x-y),-\sign(x-y)\big)\\
&= \begin{cases}
\big(\alpha(x-y)/\|x-y\|,\alpha(y-x)/\|y-x\|\big), &\text{if $x\neq y$};\\
\big(0,0\big), &\text{if $x=y$}
\end{cases}\\
&\in \menge{\alpha(s,-s)}{s\in\Sign(x-y)}\\ 
&= \partial h(x,y). 
\end{align}
\end{subequations}
The prox operator of $h$ is given by 
\begin{equation}
\label{e:210120b}
\prox_h\colon (x,y) \mapsto 
(x,y) - \frac{1}{\max\big\{2,\|x-y\|/\alpha\big\}}\big(x-y,y-x\big).
\end{equation}
\end{proposition}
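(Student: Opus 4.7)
The plan is to reduce all three claims to the representation $h = \alpha g \circ L$, where $g := \|\cdot\|\colon Y \to \RR$ and $L\colon Y\times Y \to Y\colon (x,y)\mapsto x-y$ is a continuous linear map. A direct calculation gives $L^*\colon s\mapsto (s,-s)$ and $LL^* = 2\Id_Y$, which is the key structural fact that will drive both the subdifferential and the prox computation.

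For the Lipschitz constant, I would estimate $\|L(x,y)\|^2 = \|x-y\|^2 \leq 2(\|x\|^2+\|y\|^2) = 2\|(x,y)\|^2$, which gives $\|L\| \leq \sqrt{2}$. Composing with the $1$-Lipschitz norm and multiplying by $\alpha$ immediately yields that $h$ is $\alpha\sqrt{2}$-Lipschitz.

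For the subdifferential, I would combine $\partial\|\cdot\|(w)=\Sign(w)$ (see \cite[Example~16.32]{BC2017}) with the standard chain rule for the composition of a convex function with a continuous linear map (see, e.g., \cite[Corollary~16.53]{BC2017}) to obtain $\partial h(x,y) = \alpha L^*\Sign(x-y) = \menge{\alpha(s,-s)}{s\in\Sign(x-y)}$. The displayed selection then follows by taking $s=\sign(x-y)$, with the two case distinctions ($x\neq y$ versus $x=y$) covered directly by the definition of $\sign$ in \cref{e:sS}.

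For the prox formula, I would invoke the composition rule that holds whenever $LL^*=\nu\Id$ with $\nu>0$ (see, e.g., \cite[Proposition~24.14]{BC2017}): specialized to $\nu=2$ this yields
\begin{equation*}
\prox_h(z) = z + \tfrac{1}{2}L^*\bigl(\prox_{2\alpha\|\cdot\|}(Lz) - Lz\bigr).
\end{equation*}
Using the standard soft-thresholding identity $\prox_{2\alpha\|\cdot\|}(w) = w - 2\alpha w/\max\{2\alpha,\|w\|\}$ with $w=x-y$, then applying $L^*$ to obtain the $(x-y,y-x)$ pattern and simplifying, recovers the formula stated in the proposition. The only real obstacle is bookkeeping: if the composition rule is not conveniently at hand, an alternative route is to write down the optimality condition $(x-u,y-v)\in\partial h(u,v)$ for $(u,v):=\prox_h(x,y)$ using the subdifferential computed above, and then to dispatch the two cases $u=v$ versus $u\neq v$. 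These two cases independently yield the same closed-form answer and, in fact, expose directly how the dichotomy $\|x-y\|\lessgtr 2\alpha$ gives rise to the $\max\{2,\|x-y\|/\alpha\}$ denominator appearing in \cref{e:210120b}.
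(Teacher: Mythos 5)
Your proposal is correct, and it shares the paper's structural starting point --- the factorization $h=\alpha\|\cdot\|\circ L$ with $L\colon(x,y)\mapsto x-y$, $L^*\colon s\mapsto(s,-s)$, $LL^*=2\Id$ (the paper writes $L$ as the block matrix $[\Id,-\Id]$) --- and the Lipschitz and subdifferential arguments are essentially identical to the paper's (same $\|Lz\|^2\le 2\|z\|^2$ estimate; same combination of $\partial\|\cdot\|=\Sign$ with linear-composition subdifferential calculus). Where you genuinely diverge is the prox computation. The paper invokes Beck's Lemma~6.68, which forces an explicit two-case analysis: when $\|x-y\|\le 2\alpha$ it reads off $\prox_h(x,y)=(x,y)-\tfrac12(x-y,y-x)$ directly, and when $\|x-y\|>2\alpha$ it solves a scalar equation $g(\beta)=0$ for a dual parameter $\beta^*=\|x-y\|/\alpha-2$ and substitutes back into a resolvent-type matrix identity. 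You instead use the composition rule $\prox_{f\circ L}=\Id+\nu^{-1}L^*(\prox_{\nu f}-\Id)L$ valid when $LL^*=\nu\Id$, together with the soft-thresholding formula for $\prox_{2\alpha\|\cdot\|}$; I checked that this reproduces \cref{e:210120b} exactly, with the dichotomy $\|x-y\|\lessgtr 2\alpha$ absorbed into the $\max$ of the thresholding formula rather than handled by hand. Your route is shorter and dodges the bookkeeping with $\beta$, at the cost of importing two off-the-shelf facts (the $LL^*=\nu\Id$ rule and the prox of the norm); the paper's route is more self-contained and makes the origin of the threshold $2\alpha$ visible through the explicit solve for $\beta^*$. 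Either is a complete and valid proof.
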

\begin{proof}
Set 
$A = \begin{bmatrix} \Id,-\Id\end{bmatrix}$, 
$z = (x,y)$, 
and $f(z) = \|Az\|$. 
Then $Az=x-y$ and $h = \alpha f$.
Furthermore,
$\|Az\|^2 = \|x-y\|^2
= \|x\|^2 + \|y\|^2 - 2\scal{x}{y}
\leq 2\|x\|^2 + 2\|y\|^2 = 2(\|x\|^2+\|y\|^2) = 2\|z\|^2$
which shows that $f$ is $\sqrt{2}$-Lipschitz and therefore 
$h$ is $\alpha\sqrt{2}$-Lipschitz.
The subdifferential formula follows using 
\cite[Proposition~16.6(i) and Example~16.32]{BC2017}. 
Note that, for $\beta\geq 0$, 
\begin{equation}
\label{e:210119a}
A^\intercal = \begin{bmatrix} \Id\\-\Id\end{bmatrix},
\;\;
AA^\intercal = 2\Id,
\;\;
(AA^\intercal +\beta\Id)^{-1} = \frac{1}{2+\beta}\Id,
\end{equation}
and 
\begin{subequations}
\label{e:210119b}
\begin{align}
\Id - A^\intercal(AA^\intercal +\beta\Id)^{-1}A
&= 
\begin{bmatrix}
\Id & 0 \\
0 & \Id
\end{bmatrix}
- \begin{bmatrix} \Id\\-\Id\end{bmatrix}\frac{1}{2+\beta}\begin{bmatrix} \Id,-\Id\end{bmatrix}\\
&= \begin{bmatrix}
\Id & 0 \\
0 & \Id
\end{bmatrix} 
-\frac{1}{2+\beta}
\begin{bmatrix}
\Id & -\Id \\
-\Id & \Id
\end{bmatrix}\\
&= 
\frac{1}{2+\beta}\begin{bmatrix}
1+\beta& 1 \\
1 & 1+\beta
\end{bmatrix}. 
\end{align}
\end{subequations}

We now discuss cases.

\emph{Case~1:} $\|x-y\|\leq 2\alpha$.\\
In view of \cref{e:210119a} (with $\beta=0$),
this is equivalent to 
$\|(AA^\intercal)^{-1}Az\|\leq\alpha$.
Using \cite[Lemma~6.68]{Beck2} and \cref{e:210119b} (with $\beta=0$), we obtain
(switching back to row vector notation for convenience)
\begin{equation}
\label{e:210119c1}
\prox_h(x,y)
= \frac{1}{2}(x+y,x+y)
= (x,y) - \frac{1}{2}(x-y,y-x). 
\end{equation}

\emph{Case~2:} $\|x-y\|> 2\alpha$.\\
In view of \cref{e:210119a} (with $\beta=0$),
this is equivalent to 
$\|(AA^\intercal)^{-1}Az\|>\alpha$.
Using again \cref{e:210119a} (with general $\beta\geq 0$), 
we set and obtain 
\begin{subequations}
\begin{align}
g(\beta) 
&= \|(AA^\intercal+\beta\Id)^{-1}Az\|^2 - \alpha^2\\
&= \frac{1}{(2+\beta)^2}\|x-y\|^2-\alpha^2.
\end{align}
\end{subequations}
Because $\alpha>0$ and $\beta\geq 0$,
we obtain the equivalences:
$g(\beta)=0$ 
$\Leftrightarrow$
$\|x-y\|=\alpha(2+\beta)$
$\Leftrightarrow$
$\beta= 
\|x-y\|/\alpha- 2$.
Set 
\begin{equation}
\beta^* := \frac{\|x-y\|}{\alpha} - 2 > 0
\end{equation}
so that $g(\beta^*)=0$. 
Moreover, 
$2+\beta^* = \|x-y\|/{\alpha}$ and 
$1+\beta^* = (\|x-y\|-\alpha)/\alpha$. 
Hence \cref{e:210119b} (with $\beta=\beta^*$) turns into 
\begin{subequations}
\label{e:210120a}
\begin{align}
\Id - A^\intercal(AA^\intercal +\beta^*\Id)^{-1}A
&= 
\frac{1}{2+\beta^*}\begin{bmatrix}
1+\beta^* & 1 \\
1 & 1+\beta^*
\end{bmatrix} \\
&=
\frac{\alpha}{\|x-y\|}\begin{bmatrix}
\frac{\|x-y\|-\alpha}{\alpha} & 1 \\
1 & \frac{\|x-y\|-\alpha}{\alpha}
\end{bmatrix}\\
&= 
\begin{bmatrix}
1 & 0 \\ 0 & 1
\end{bmatrix}
- \frac{\alpha}{\|x-y\|}
\begin{bmatrix}
1 & - 1\\
-1 & 1
\end{bmatrix}. 
\end{align}
\end{subequations}
Using \cite[Lemma~6.68]{Beck2} and \cref{e:210120a}, we obtain
(switching back to row vector notation for convenience)
\begin{equation}
\label{e:210119c2}
\prox_h(x,y)
= (x,y)-\frac{\alpha}{\|x-y\|}(x-y,y-x).
\end{equation}
Finally, the formula given in \cref{e:210120b}
follows by combining \cref{e:210119c1} and \cref{e:210119c2}.
\end{proof}

\subsection{Argmax operator}

\label{ss:argmax}

Consider, for $\alpha>0$ and $\varepsilon >0$, 
\begin{equation}
f_0(x,y) := \alpha\thalb\|x-y\|^2 + \varepsilon\thalb\big(\|x\|^2+\|y\|^2),
\end{equation}
which is a perturbation of $\alpha\thalb\|x-y\|^2$
that is $\varepsilon$-strongly convex. 

Given $(u,v)\in X$, 
the dual-based proximal methods of 
\cref{ss:dualthingy} require from us to 
find the unique maximizer of 
\begin{subequations}
\label{e:210211a}
\begin{align}
(x,y)&\mapsto \scal{(u,v)}{(x,y)}  - f_0(x,y)\\
&= \scal{u}{x} + \scal{v}{y} - \alpha\thalb\|x-y\|^2 - \varepsilon\thalb\big(\|x\|^2+\|y\|^2).
\end{align}
\end{subequations}
We now derive an explicit formula for this maximizer: 

\begin{proposition}
\label{p:210211b}
Given $\alpha>0$, $\varepsilon>0$, 
and $(u,v)\in X$, 
the unique maximizer of \cref{e:210211a} is
\begin{equation}
(x,y)
= 
\frac{1}{(2\alpha+\varepsilon)\varepsilon}
\big((\alpha+\varepsilon)u+\alpha v, (\alpha+\varepsilon)v+\alpha u \big).
\end{equation}
\end{proposition}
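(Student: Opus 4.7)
The plan is to exploit the fact that the objective is strictly concave, so a unique maximizer exists and is characterized by the vanishing of the gradient. The computation reduces to solving a $2\times 2$ block linear system, which can be done explicitly.

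First, I would observe that $f_0$ is $\varepsilon$-strongly convex (as stated in the paragraph preceding the proposition), hence the map in \cref{e:210211a} is $\varepsilon$-strongly concave. In particular it is coercive and admits a unique maximizer, and by Fermat's rule this maximizer is characterized by setting the gradient to zero.

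Next, I would compute the partial gradients directly. Writing $F(x,y)$ for the expression in \cref{e:210211a}, the optimality conditions $\nabla_x F = 0$ and $\nabla_y F = 0$ become
\begin{subequations}
\begin{align}
u - \alpha(x-y) - \varepsilon x &= 0,\\
v - \alpha(y-x) - \varepsilon y &= 0,
\end{align}
\end{subequations}
which rearrange to the linear system
\begin{equation}
\begin{bmatrix} (\alpha+\varepsilon)\Id & -\alpha\Id \\ -\alpha\Id & (\alpha+\varepsilon)\Id \end{bmatrix}\begin{bmatrix} x\\ y \end{bmatrix} = \begin{bmatrix} u\\ v \end{bmatrix}.
\end{equation}

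Finally, I would invert this $2\times 2$ block matrix. Its (scalar) determinant is $(\alpha+\varepsilon)^2 - \alpha^2 = \varepsilon(2\alpha+\varepsilon)$, which is strictly positive, so the matrix is invertible and the inverse equals
\begin{equation}
\frac{1}{\varepsilon(2\alpha+\varepsilon)}\begin{bmatrix} (\alpha+\varepsilon)\Id & \alpha\Id \\ \alpha\Id & (\alpha+\varepsilon)\Id \end{bmatrix}.
\end{equation}
Applying this to $(u,v)$ yields exactly the claimed formula. No serious obstacle is anticipated; the only mild subtlety is justifying that Fermat's rule gives the \emph{global} maximizer, which is immediate from strict concavity.
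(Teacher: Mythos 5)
Your proposal is correct and follows essentially the same route as the paper's proof: set the gradient of the strongly concave objective to zero, obtain the same $2\times 2$ block linear system, and invert the matrix to get the stated formula. The only difference is that you additionally spell out the determinant computation and the justification via strict concavity, which the paper leaves implicit.
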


\begin{proof}
Because $f_0$ is strongly convex, 
we employ standard convex calculus 
to find the maximizer by 
finding the zero of the gradient of the function in 
\cref{e:210211a}. That is, we need to solve 
\begin{align}
(0,0)
&= 
\big(u-\alpha x+\alpha y-\varepsilon x,v-\alpha y+\alpha x-\varepsilon y \big);
\end{align}
or equivalently (switching to more formal column vector notation),
\begin{equation}
\begin{bmatrix} 
  \alpha+\varepsilon & -\alpha\\
  -\alpha & \alpha+\varepsilon 
\end{bmatrix}
\begin{bmatrix}
x\\ y
\end{bmatrix}
= \begin{bmatrix}
u\\v
\end{bmatrix}. 
\end{equation}
Because 
\begin{equation}
  \begin{bmatrix} 
    \alpha+\varepsilon & -\alpha\\
    -\alpha & \alpha+\varepsilon 
  \end{bmatrix}^{-1}
  = 
  \frac{1}{(2\alpha+\varepsilon)\varepsilon}
  \begin{bmatrix} 
    \alpha+\varepsilon & \alpha\\
    \alpha & \alpha+\varepsilon 
  \end{bmatrix}
\end{equation}
we obtain the announced formula.
\end{proof}

\section{Numerical experiments}

\label{sec:perfeval}

We start by discussing metrics --- in the sense of ``standard of measurement'' 
not in the sense of topology --- 
to evaluate the quality of the the iterates 
for the different algorithms proposed in \cref{sec:algos}. 
To make the measurement uniform over the different algorithms, 
we use this metric once for each prox or proj evaluation. 
For the same purpose, 
we consider $A_i$ or $B_i$ as unit inputs in these operators. 
So, for example, as \textbf{DR} projects to all $A_i$ and $B_i$ along with a prox evaluation, we get a total of $2m+1$ operations. 
Since it is not possible to obtain the final output of a given iteration before computing all the proxes, 
we repeat the final output given by the iteration $2m+1$ times.
Again for the sake of uniformity, we only repeat the final output of each iteration, 
regardless of whether intermediate updates can be calculated.
We shall consider two cases: in the first, true solutions are known 
(and assumed to be unique) 
while in the second, they aren't.
The former case allows us to inspect the progress of the iterates towards the solution,
while in the latter case a metric is needed to gauge the performance of the algorithms.
The latter scenario is the one most realistic for applications.

\subsection{Two examples}

\label{ss:known}

\begin{figure}[ht]
\centering
\includegraphics[width=\textwidth]{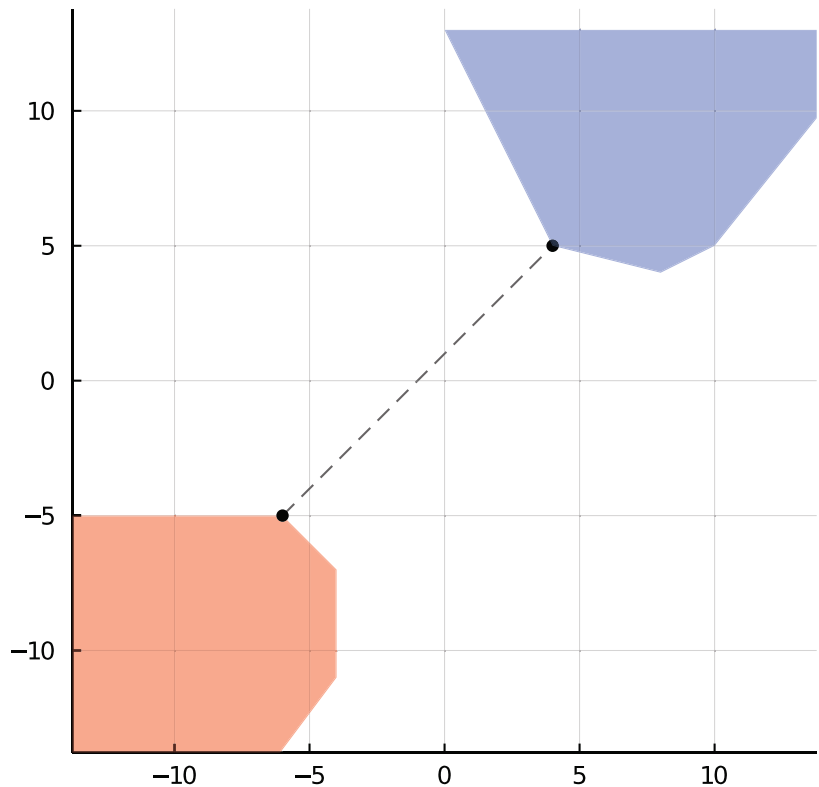}
\caption{Visualization of the sets {A} (red) 
and {B} (blue) for \cref{eg:toy} and the solution pair (black dots) 
for \cref{e:P}.}
\label{f:toyset}
\end{figure}

The convergence plots for the algorithms are straightforward when the solution is known.
Assuming the solution is unique and denoted by  $(\bar{x},\bar{y})$, 
we use the metric 
\begin{equation}
(x,y)\mapsto \|(x,y)-(\bar{x},\bar{y})\|\label{e:knownsol}
\end{equation}
applied to the appropriate iterates of the algorithms.

The distance to the solution \cref{e:knownsol} is evaluated once 
for each projection or prox operator evaluation. 
For example, because each iteration of \textbf{DR} 
(see \cref{ss:DR}) 
uses 
$2m+1$ prox evaluation 
(see the first paragraph of  \cref{sec:perfeval}), 
one \textbf{DR} step invokes 
$2m+1$ ``updates''. 
This approach ensures that the evaluation has some uniformity/fairness 
over the different algorithms. 

We begin with an example where we know the solution.
This (small-scale) example is motivated by the one provided
by Aharoni et al.\ in \cite[Section~5]{ACJ}.

\begin{example}
\label{eg:toy}
Consider the subsets $A$ and $B$ of $Y=\RR^2$,
defined by the two systems of $m=4$ linear inequalities

\begin{figure}[ht]
\centering 
\includegraphics[width=0.9\textwidth]{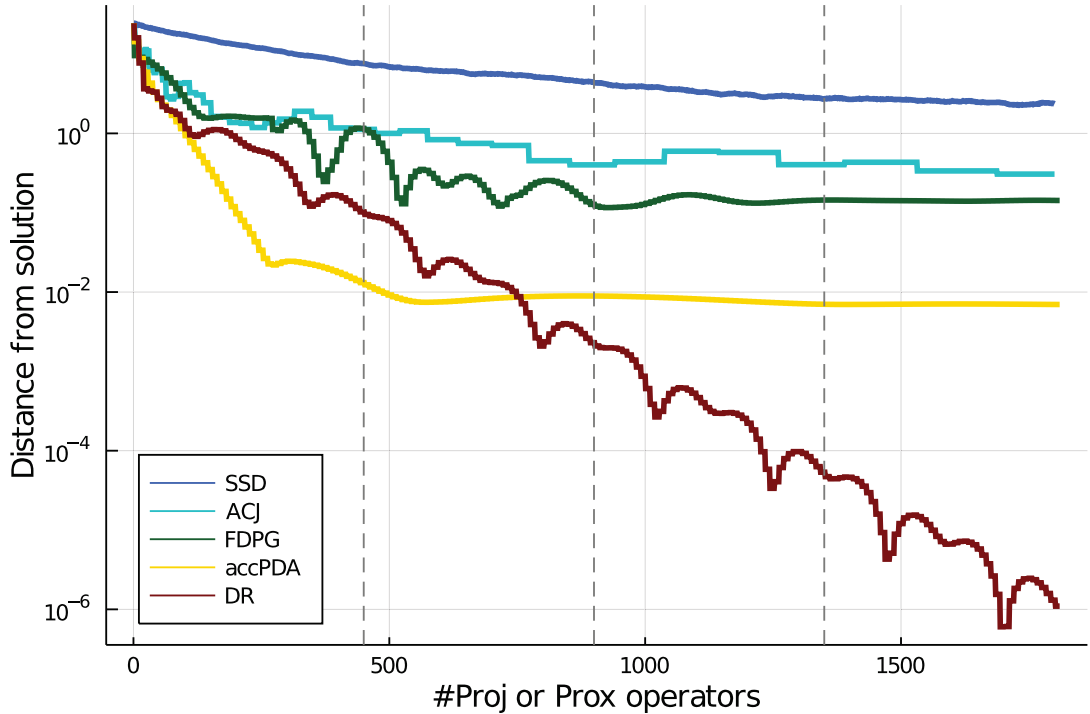}
\caption{Convergence plot for \cref{eg:toy} using the 
known-solution metric
}
\label{f:toycvg}
\end{figure}
\begin{equation}
\left[\begin{array}{rrr}
4&3&-17\\1&0&4\\1&1&11\\0&1&5
\end{array}\right]
\begin{bmatrix}
x_1\\x_2\\ 1
\end{bmatrix}\leq 
\left[\begin{array}{r} 0 \\ 0 \\ 0 \\ 0\end{array}\right]
\quad\text{and}\quad
\left[\begin{array}{rrr}
5&-4&-30\\1&-2&0\\-1&-4&24\\-2&-1&13
\end{array}\right]
\begin{bmatrix}
x_1\\x_2\\ 1
\end{bmatrix}\leq 
\left[\begin{array}{r} 0 \\ 0 \\ 0 \\ 0\end{array}\right],
\end{equation}
respectively. 
The corresponding problem \cref{e:P} possesses
the unique solution 
\begin{equation}
\big( (-6,-5),(4,5) \big),
\end{equation} 
which is also visualized in \cref{f:toyset}. 

We ran the algorithms from \cref{sec:algos}, 
and also accelerated versions when available. 
The algorithms were run for a total 1800 Prox or Projection 
operations per algorithm. 
The accelerated versions performed clearly better than the 
original versions in this case. 
Therefore, for the clarity of the exposition, 
we do not report the \textbf{DPG} and \textbf{PDA} results. 
We used the following parameters and
also report to how many iterations in each algorithm 
this corresponded: 

\begin{enumerate}[leftmargin=0cm,itemindent=2.5cm]
\item[\textbf{ACJ}:] 
55 iterations, with
$\lambda_k={1}/{(k+1)}$, $n_k=\lfloor 1.1^k \rfloor$, and

\hspace{2.5cm}
$(x'_k,y'_k)=
\begin{cases}
(y_0,x_0),&\text{if }k=0;\\
(y_{k-1},x_{k-1}),&\text{otherwise}
\end{cases}$ 
(see \cref{ss:ACJ}). 
\item[\textbf{DR}:] 
200 iterations, with $p=1$, $\alpha=5$, and $\lambda=1$
(see \cref{ss:DR}). 
\item[\textbf{FDPG}:] 
200 iterations, with $\alpha =1$, $L=16$, and $\varepsilon=1/4$ 
(see \cref{ss:DPG}). 
\item[\textbf{accPDA}:] 
200 iterations, with 
$\alpha = 1$, $\rho_0=1$ and $\rho_{\text{max}}=100000 $ 
(see \cref{ss:PDA}). 
\item[\textbf{SSD}:] 
1800 iterations, with 
$\alpha=1$, $L=1$, and $\eta_k=1/\sqrt{k+1}$ 
(see \cref{ss:SSD}). 
\end{enumerate}

In each case, we use the starting point $x_0=y_0=(8,-13)$. The distance of the iterates from the solution, 
calculated using \cref{e:knownsol},
results in the convergence plot shown in \cref{f:toycvg},
where the grey-dotted lines marks intervals of 50 iterations of \textbf{DR}.
From the plot, we see that \textbf{accPDA} appears to perform better than
\textbf{DR} for the first 50 iterations, 
but then \textbf{DR} slowly but steadily begins to produce 
the most accurate solution.
We note that since \textbf{FDPG} is solving for a strong convex version of the objective function $\|x-y\|$,
it converges to a solution that is not the same as our original problem.
\end{example}

Note that \textbf{ACJ} {and \textbf{SSD}} do not perform as well as 
the other algorithms in \cref{eg:toy};
however, when $m$ becomes larger, 
they become much more competitive
as the following example illustrates:

\begin{figure}[ht]
\centering
\includegraphics[width=\textwidth]{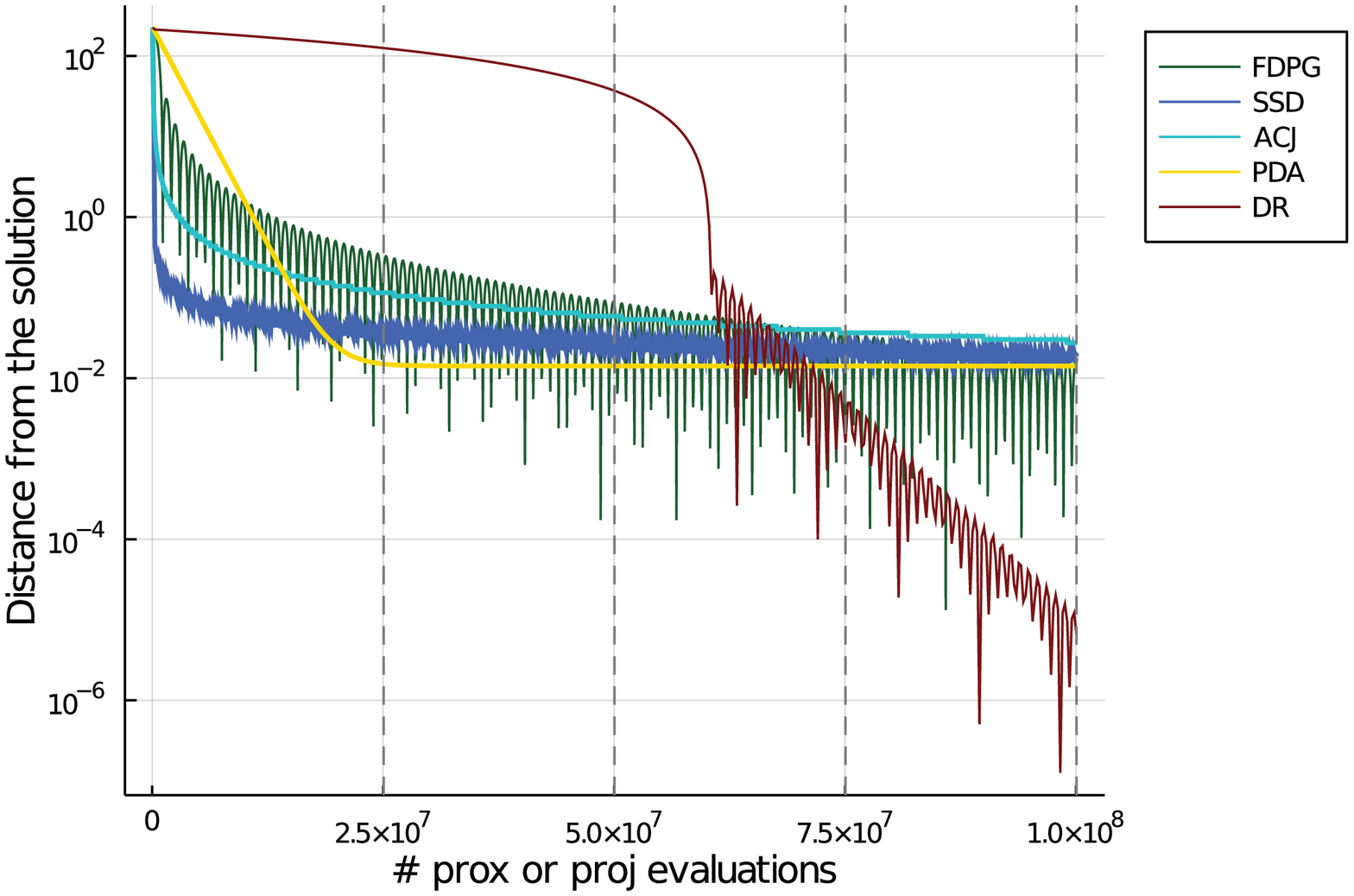} 
\caption{\cref{eg:bigtoycvg} for which $n=m=1000$}
\label{f:bigtoycvgn1000}
\end{figure}

\begin{example}

\label{eg:bigtoycvg}

Let $n\in\{1,2,\ldots\}$, and 
consider the subsets $A$ and $B$ of $Y=\RR^n$, defined by 
the two system of $m=n$ linear inequalities 
\begin{equation}
\begin{bmatrix}x_1\\\vdots\\x_n\end{bmatrix}\geq \begin{bmatrix}5\\\vdots\\5\end{bmatrix}
\quad \text{and} \quad 
\begin{bmatrix}x_1\\\vdots\\x_n\end{bmatrix}\leq \begin{bmatrix}-5\\\vdots\\-5\end{bmatrix},
\end{equation}
respectively.
Clearly, the unique solution to the corresponding problem 
\cref{e:P} is $(\bar{x},\bar{y})\in Y\times Y$, where 
$\bar{x}=(5,\ldots,5)$ and $\bar{y}=(-5,\ldots,-5)$. 
This time we run the algorithms for around $10^8$ prox evaluations 
with the starting point $x_0=y_0=(0,\dots,0)$. 
It turns out that in this case, \textbf{PDA} outperforms \textbf{accPDA}, 
so we omit the results of the latter. 
We only use the error metric once after every 50000 evaluations. 
We used the following parameters and also report to how many iterations in each algorithm this corresponded: 

\begin{enumerate}[leftmargin=0cm,itemindent=2.5cm]
\item[\textbf{ACJ}:] 
169 iterations, with
$\lambda_k={1}/{(k+1)}$, $n_k=\lfloor 1.1^k \rfloor$, and

\hspace{2.5cm}
$(x'_k,y'_k)=
\begin{cases}
(y_0,x_0),&\text{if }k=0;\\
(y_{k-1},x_{k-1}),&\text{otherwise}
\end{cases}$ 
(see \cref{ss:ACJ}). 
\item[\textbf{DR}:] 
50000 iterations, with $p=1$, $\alpha=5$, and $\lambda=1$
(see \cref{ss:DR}). 
\item[\textbf{FDPG}:] 
50000 iterations, with $\alpha =1$, $L=10000$, and $\varepsilon=1/10$ 
(see \cref{ss:DPG}). 
\item[\textbf{PDA}:] 
50000 iterations, with 
$\alpha = 1$, $\rho_0=1$ and $\rho_{\text{max}}=10^5 $ 
(see \cref{ss:PDA}). 
\item[\textbf{SSD}:] 
$\approx 10^8$ iterations, with 
$\alpha=1$, $L=10$, and $\eta_k=1/\sqrt{k+1}$ 
(see \cref{ss:SSD}). 
\end{enumerate}
For $n=1000 = m$ and the error metric given by 
\cref{e:knownsol},
we obtain the plot shown in \cref{f:bigtoycvgn1000}. 
Note that \textbf{ACJ} and \textbf{SSD} operate in 
$X = Y\times Y = \RR^{2000}$ while, 
for instance, $\textbf{DR}$ operates
in the much bigger space  $X^{m+1}=\RR^{2001000}$!
The plot makes it clear that in this 
situation \textbf{ACJ} and \textbf{SSD} fare much better
than in \cref{eg:toy}.

We note that given enough iterations, 
\textbf{DR} again trumps the other algorithms,
but the initial descent is very slow. 
In fact,the other algorithms perform much better in the beginning than \textbf{DR}. 
This suggest an interesting topic for further research:
one could consider a hybrid approach, 
where one uses an algorithm such as \textbf{SSD},
\textbf{ACJ}, \textbf{PDA} or \textbf{FDPG}, 
and then switches over to \textbf{DR}. 
{ Note that only \textbf{ACJ} and \textbf{DR} are known
to converge to a solution of the original --- the algorithms \textbf{SSD}, \textbf{PDA}, 
and \textbf{FDPG} solve perturbed versions and thus can play a role in quickly getting ``close'' to nearby points  in a preprocessing capacity.}
\end{example}

\subsection{What to do in the absence of known solutions}

\label{ss:unknown}

\begin{figure}[H]
\centering
\includegraphics[width=0.9\textwidth]{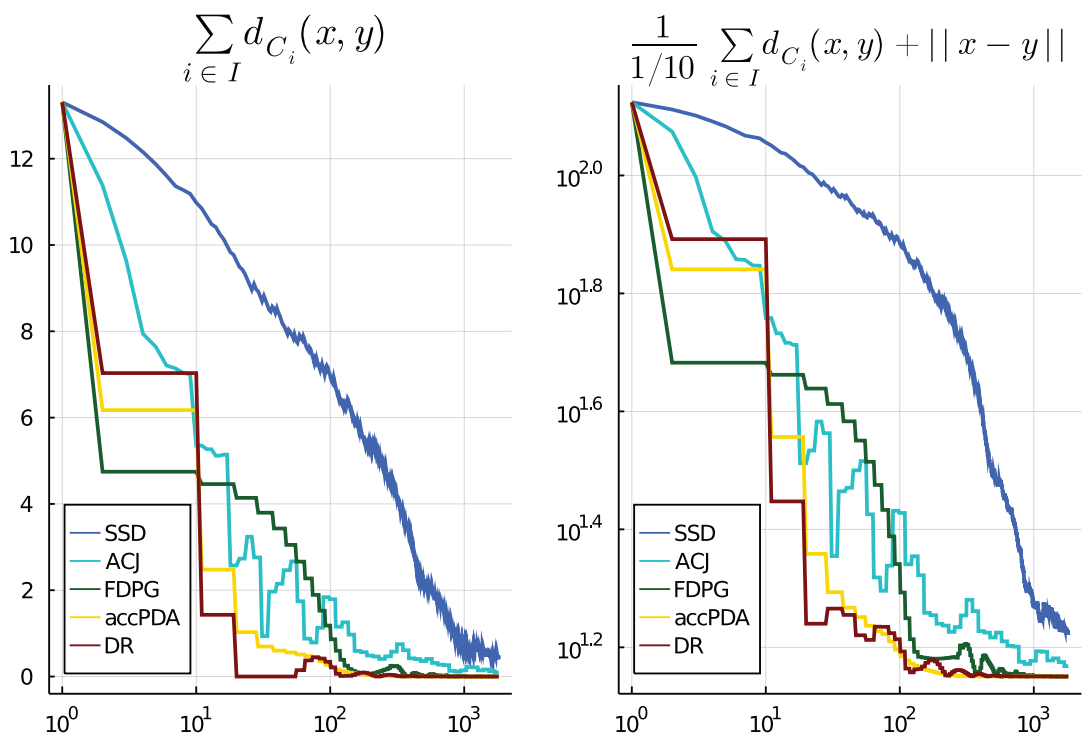}
\caption{Using \cref{e:unknownmetric} to measure performance of the algorithms}
\label{f:unknownmetric}
\end{figure}

In general, one has no access to true solutions, 
so it becomes necessary to measure performance by a metric 
different from \cref{e:knownsol}.  
We propose the measure 
\begin{equation}
\label{e:unknownmetric}
D_\delta(x,y) :=\|x-y\|+\frac{1}{\delta}\sum_{i\in I} 
d_{C_i}(x,y),
\end{equation}
where $\delta>0$. 
Because the problem asks to find a point in $A\times B$,
feasibility is of greater importance than 
minimizing $\|x-y\|$; 
thus, a smaller value of $\delta$ is 
desirable to stress feasibility. 
(One could also envision a ``dynamic'' metric, 
where $\delta\to 0$ as the number of iterations increases,
but we have not tested this.) 
Revisiting the problem considered in \cref{eg:toy},
we show in \cref{f:unknownmetric} the convergence plot 
using the parameter $\delta=1/10$. 
This time, the horizontal axis was taken with a log scaling to
increase readability of the resulting graph. 
The behaviour of the algorithms in the plot on the right, 
which includes the feasibility conditions along with the distance between $x$ and $y$, resembles the one seen in \cref{f:toycvg}.

\subsection{Combining constraints}
\label{sec:duo}

\begin{figure}[H]
\centering
\includegraphics[width=0.9\textwidth]{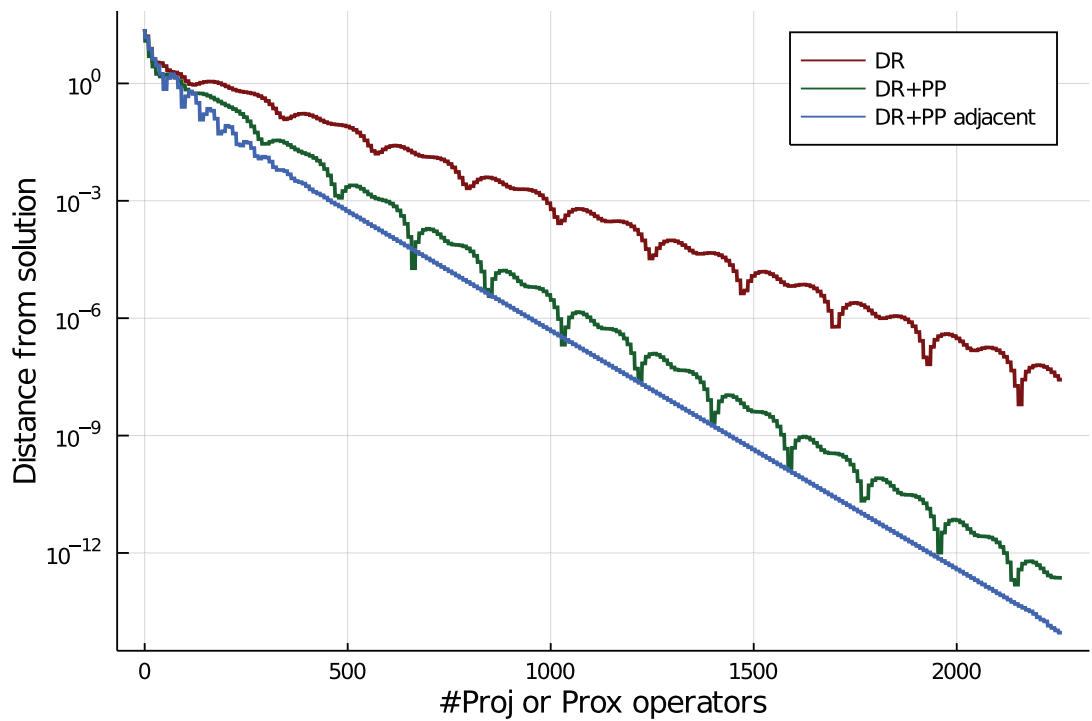}
\caption{Comparing variants of \textbf{DR} on \cref{eg:toy} with paired projections}
\label{f:DuoDR}
\end{figure}

In some cases, it is possible to combine constraints and still
be able to compute the projection onto the intersection. 
For instance, if all sets $A_i$ are halfspaces,
then any two sets $A_i$ and $A_j$ may be combined and
the projection onto the intersection is explicitly available 
using, e.g., \cite[Proposition~2.22--2.24]{BC2017}.

Revising \cref{eg:toy} in this light, 
we ran \textbf{DR} with the paired projection (PP), 
with choosing mostly non-adjacent halfspaces 
(labelled as \texttt{DR+PP} in \cref{f:DuoDR}), 
and also with choosing explicitly adjacent halfspaces 
(labelled as \texttt{DR+PP adjacent} in \cref{f:DuoDR}) 
along with the original version of \textbf{DR}
(labelled as \texttt{DR} in \cref{f:DuoDR}) 
To compare this fairly, 
we count one ``paired'' projection (onto the intersection 
of two halfspaces) as being equivalent to two regular projections. 
The convergence plot shown in \cref{f:DuoDR} illustrates that 
adding the paired projections significantly 
improves the performance significantly, 
even more so when the halfspaces are adjacent. 
The well known and characteristic ``rippling'' seen in 
typical \textbf{DR} curves is heavily damped in the last case.

\begin{figure}[h]
   \centering
\includegraphics[width=\textwidth]{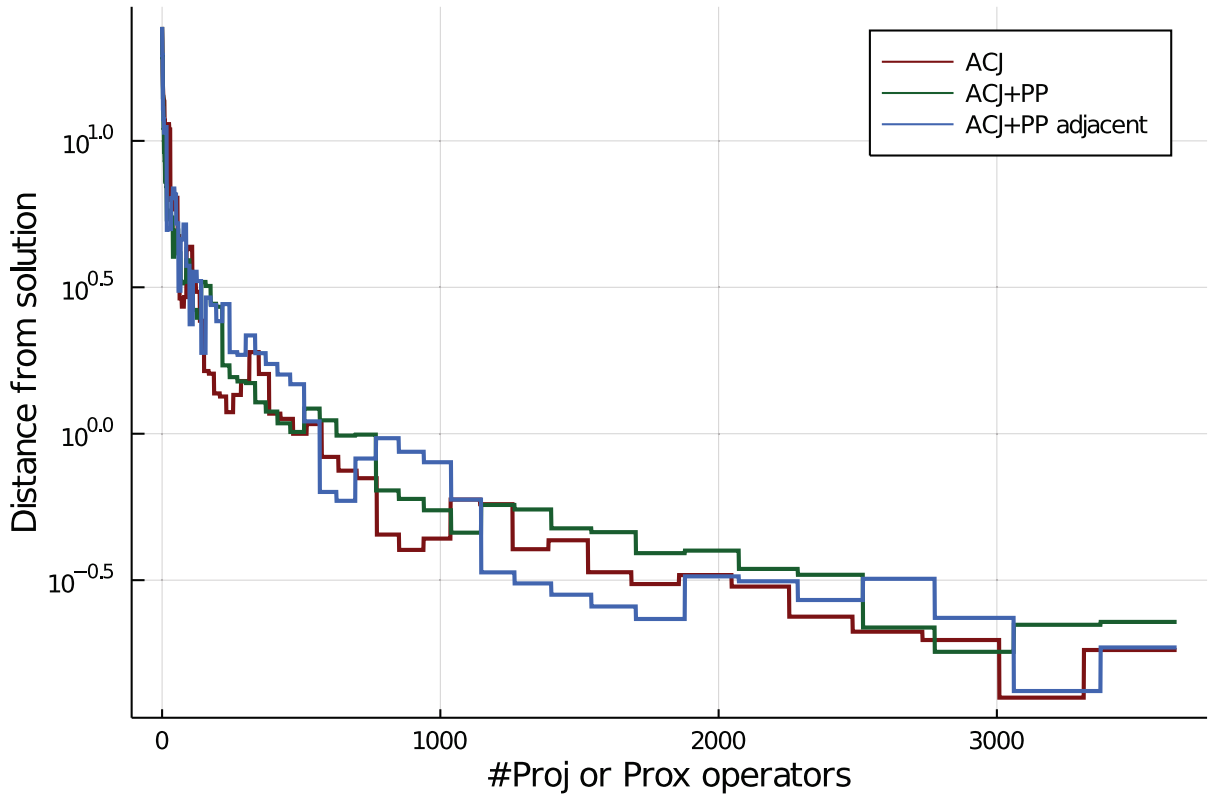}
   \caption{Comparing \textbf{ACJ} on \cref{eg:toy} with paired projections}
   \label{f:DuoACJ}
\end{figure}
\begin{figure}[htbp]
   \centering   \includegraphics[width=0.8\textwidth]{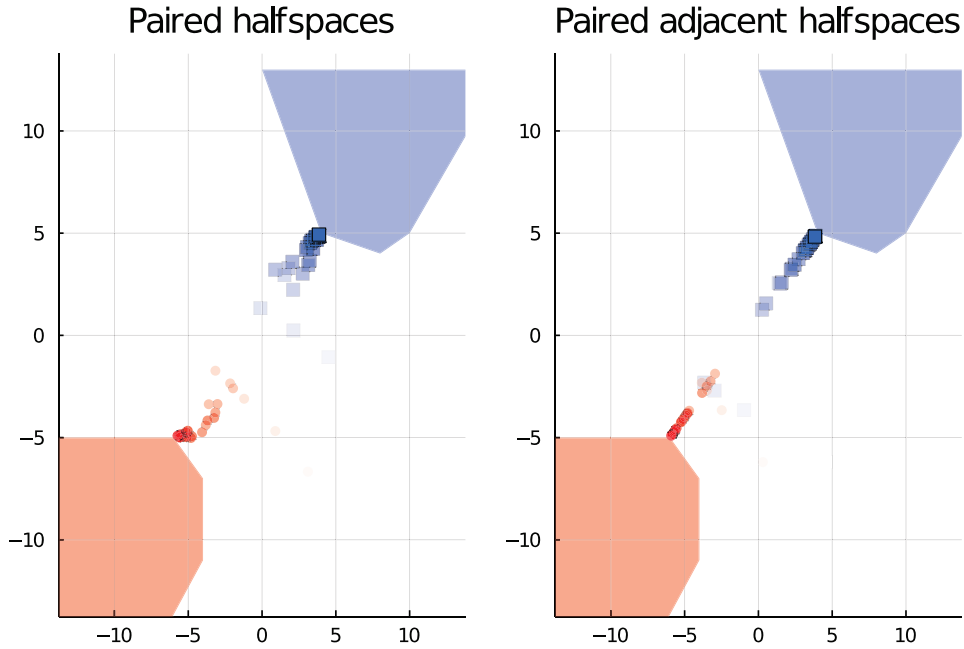}
   \caption{Appearance of the iterations of \textbf{ACJ} for the paired projections}
   \label{f:DuoACJPlot}
\end{figure}

Using this technique on \textbf{ACJ},
we note that the paired-projection variants 
do not improve the performance significantly; 
see \cref{f:DuoACJ}. 
On the other hand, the approach of the iterates to the true solution looks far less scattered as can be seen in 
\cref{f:DuoACJPlot}. 

In higher dimensions, 
further investigations are needed to determine the ``best'' way 
to pair up halfspaces as ``adjacent halfspaces''.

\section{Conclusion}
\label{sec:final}

We revisited the recent study by Aharoni, Censor, and Jiang
on finding best approximation pairs of two polyhedra.
The framework we proposed works for two sets that are themselves
finite intersections of closed convex sets with ``simple'' projections. 
Several algorithms were proposed and the required
prox operators were computed. Our numerical experiments suggested
that other algorithms deserve serious consideration.

\section*{Acknowledgments}
{The authors thank the editor and the reviewers for 
helpful suggestions and constructive feedback which helped us to
improve the presentation of the results, and Patrick Combettes 
for pointing out the relevant reference \cite{CDV}.}  
HHB and XW are supported by the Natural Sciences and
Engineering Research Council of Canada.

%%%%%%%%%%%%%%%%%%%%%%%%
%     Bibliography     %
%%%%%%%%%%%%%%%%%%%%%%%%


\begin{thebibliography}{999}
%\sepp



\bibitem{ACJ}
R.\ Aharoni, Y.\ Censor, Z.\ Jiang,
Finding a best approximation pair of points for two polyhedra,
\emph{Computational Optimization and Applications}~71 (2018), 509--523. 
\url{https://doi.org/10.1007/s10589-018-0021-3}

\bibitem{Bauschke}
H.H.\ Bauschke,
The approximation of fixed points of compositions of nonexpansive 
mappings in Hilbert space,
\emph{Journal of Mathematical Analysis and Applications}~202 (1996), 150--159.
\url{https://doi.org/10.1006/jmaa.1996.0308}

\bibitem{01}
H.H.\ Bauschke and J.M.\ Borwein,
On the convergence of von Neumann's alternating projection algorithm for two sets, 
\emph{Set-Valued Analysis}~1 (1993), 185--212. 
\url{https://doi.org/10.1007/BF01027691}

\bibitem{02}
H.H.\ Bauschke and J.M.\ Borwein,
Dykstra's alternating projection algorithm for two sets,
\emph{Journal of Approximation Theory}~79 (1994), 418--443. 
\url{https://doi.org/10.1006/jath.1994.1136}

\bibitem{BC2017}
H.H.\ Bauschke and P.L.\ Combettes,
\emph{Convex Analysis and Monotone Operator Theory in Hilbert Spaces},
second edition,
Springer, 2017. 
\url{https://doi.org/10.1007/978-3-319-48311-5}

\bibitem{BCL}
H.H.\ Bauschke, P.L.\ Combettes, and D.R.\ Luke,
Finding best approximation pairs relative to 
two closed convex sets in Hilbert spaces,
\emph{Journal of Approximation Theory~127} (2004), 178--192. 
\url{https://doi.org/10.1016/j.jat.2004.02.006}



\bibitem{130}
H.H.\ Bauschke and W.M.\ Moursi,
On the behavior of the Douglas-Rachford algorithm for minimizing
a convex function subject to a linear constraint,
\emph{SIAM Journal on Optimization}~30 (2020), 2559--2576. 
\url{https://doi.org/10.1137/19M1281538}

\bibitem{Beck2}
A.\ Beck,
\emph{First-Order Methods in Optimization},
SIAM 2017. 
\url{https://doi.org/10.1137/1.9781611974997}

\bibitem{BT14}
A.\ Beck and M.\ Teboulle,
A fast dual proximal-gradient algorithm for convex minimization 
and applications,
\emph{Operations Research Letters}~42 (2014), 1--6.
\url{https://doi.org/10.1016/j.orl.2013.10.007}

\bibitem{BCN}
L.\ Bottou, F.E.\ Curtis, and J.\ Nocedal,
\emph{Optimization methods for large-scale machine learning},
\emph{SIAM Review}~60 (2018), 223--311.
\url{https://doi.org/10.1137/16M1080173}

\bibitem{Censor06}
Y.\ Censor,
Computational acceleration of projection algorithms for the linear best approximation problem,
\emph{Linear Algebra and its Applications}~416 (2006), 111--123.
\url{https://doi.org/10.1016/j.laa.2005.10.006}

\bibitem{CG}
W.\ Cheney and A.A.\ Goldstein, 
Proximity maps for convex sets, 
\emph{Proceedings of the AMS}~10 (1959), 448--450. 
\url{https://doi.org/10.2307/2032864}

\bibitem{CDV}
P.L.\ Combettes, \DJ. D\~ung, 
and B.C.\ V\~u, 
Dualization of signal recovery problems, 
\emph{Set-Valued and Variational Analysis}~18 (2010), 373--404. 
\url{https://doi.org/10.1007/s11228-010-0147-7}
% \url{https://pcombet.math.ncsu.edu/svva1.pdf}

\bibitem{DR}
J.\ Douglas and H.H.\ Rachford,
On the numerical solution of heat conduction problems in two and three space variables, 
\emph{Transactions of the AMS}~82 (1956), 421--439.
\url{https://doi.org/10.1090/S0002-9947-1956-0084194-4}

\bibitem{Halpern}
B.\ Halpern,
Fixed points of nonexpanding maps,
\emph{Bulletin of the AMS}~73 (1967), 957--961. 
\url{https://doi.org/10.1090/S0002-9904-1967-11864-0}


\bibitem{KZL}
K.L.\ Keys, H.\ Zhou, and K.\ Lange,
Proximal distance algorithms: theory and practice,
\emph{Journal of Machine Learning Research}~20 (2019), 1--38. 
\url{https://jmlr.csail.mit.edu/papers/v20/17-687.html}

\bibitem{KiWo}
J.\ Kiefer and J.\ Wolfowitz,
Stochastic estimation of the maximum of a regression function,
\emph{Annals of Mathematical Statistics}~23(3) (1952), 
462--466.
\url{https://doi.org/10.1214/aoms/1177729392}

\bibitem{Lange}
K.\ Lange,
\emph{MM Optimization Algorithms},
SIAM 2016. 
\url{https://doi.org/10.1137/1.9781611974409}

\bibitem{LangeKeys}
K.\ Lange and K.L.\ Keys,
The proximal distance algorithm,
in 
\emph{Proceedings of the 2014 International Congress of Mathematicians},
pages~96--116, Seoul: Kyung Moon, 4. 
See also \url{https://arxiv.org/abs/1507.07598}

\bibitem{Lions}
P.-L.\ Lions, 
Approximation de points fixes de contractions, 
\emph{Comptes Rendus Hebdomadaires des S\'eances de l'Acad\'emie des Sciences S\'eries A (Sciences Math\'ematiques)) et B (Sciences Physiques)}~284(21),
(June 6, 1977), 1357--1359. 
\url{https://gallica.bnf.fr/ark:/12148/bpt6k5731057m}
%\emph{C.\ R.\ Acad.\ Sci.\ S\'er. A-B Paris}~284 (1977), 1357--1359. 

\bibitem{LM}
P.-L.\ Lions and B.\ Mercier,
Splitting algorithms for the sum of two nonlinear operators,
\emph{SIAM Journal on Numerical Analysis}~16 (1979), 964--979.
\url{https://doi.org/10.1137/0716071} 

\bibitem{RoMo}
H.\ Robbins and S.\ Monro,
A stochastic approximation method,
\emph{Annals of Mathematical Statistics}~22(3) (1951), 400--407.
\url{https://doi.org/10.1214/aoms/1177729586}

\bibitem{Wittmann}
R.\ Wittmann, 
Approximation of fixed points of nonexpansive mappings, 
\emph{Archiv der Mathematik}~58 (1992), 486--491. 
\url{https://doi.org/10.1007/BF01190119}

\end{thebibliography}
\end{document}